\newtheorem{theorem}{Theorem}
\newtheorem{claim}[theorem]{Claim}
\newtheorem{lemma}[theorem]{Lemma}
\newenvironment{proof}[1][Proof]{\noindent\textbf{#1.} }{\ \rule{0.5em}{0.5em}}
\begin{document}

\title{ON THE SOLUTIONS OF THE EQUATION $x^{2}+19^{m}=y^{n}$}
\author{$^{1}$Bilge PEKER \\
$^{1}$Elementary Mathematics Education, \\
Ahmet Kelesoglu Education Faculty, \\
Selcuk University, Konya, Turkey, bilge.peker@yahoo.com \and $^{2}$Selin
(INAG) CENBERCI \\
$^{2}$Department of Mathematics,\\
Ahmet Kelesoglu Education Faculty, \\
Selcuk University, Konya, Turkey, \\
inag\_s@hotmail.com}
\maketitle

\begin{abstract}
In this article, we consider the equation $x^{2}+19^{m}=y^{n}$ , $n>2,\ m>0$.

We find the solutions of the title equation for not only $2\mid m$ but also $%
2\nmid m$.
\end{abstract}

\textbf{1. Introduction}

\bigskip

\ \ \ \ \ \ \ The Diophantine equation which is so-called generalized
Ramanujan-Nagell equation \ $x^{2}+k=y^{n},\ k,x,y,n\in 
\mathbb{Z}
$, $n>2$ has been studied extensively. When\ $\ n=3$ \ the equation
clarifies an elliptic curve and it is well known as Mordell's equation.
Mordell studied this type of equation in detail in his book $[12]$. When $%
n>3 $, the equation clarifies a hyperelliptic curve. This case also has a
lot of literature. J.H.E. Cohn $[8]$ solved the equation for 77 values of
positive $k$ under 100. We are interested in the case when $k=c^{m}$, $c$
is\ a positive integer,\ $m\in 
\mathbb{N}
$ \ is\ unknown. There are lots of studies for different cases of $c.$ For
example L. Tao $[13]$\ considered the equation for $c=3\ $and $5.\ $

\bigskip

In $\left[ 6\right] ,$ Cohn considered the title equation for the case $m=1.$
He proved that the equation $x^{2}+19=y^{n}$ has two primitive integer
solutions with $n\geq 3$, namely the solutions are $\left( x,y,k,n\right)
=\left( 18,7,0,3\right) ,\ \left( 22434,55,0,5\right) .\ $On the other hand,
in $\left[ 2\right] $ Arif and Muriefah gave a theorem for the solutions to
the equation $x^{2}+q^{2k+1}=y^{n}$ where $k\geq 0$ and $n\geq 5$ is an odd
integer. With these conditions the equation $x^{2}+q^{2k+1}=y^{n}$ has
exactly two families of solutions given by $\left( x,y,k,n\right) =\left(
22434.19^{5M},55.19^{2M},5M,5\right) $, $\left(
2759646.341^{5M},377.341^{2M},341M,5\right) .$

\bigskip

\ \.{I}.N. Cang\"{u}l \ et. al. $[5]$ found all solutions of the equation \ $%
x^{2}+11^{2k}=y^{n},$ $\ x\geq 1,$ $\ y\geq 1,$ $\ k\in 
\mathbb{N}
,$ $\ n\geq 3.$ E. Demirpolat et. al. $[9]$ solved the equation $%
x^{2}+11^{2k+1}=y^{n}$ .

H. Zhu and M. Le $\left[ 14\right] $ give all solutions of some generalized
Lebesque- Nagell equations \ $x^{2}+q^{m}=y^{n}$, where the class number of
the imaginary quadratic field $%
\mathbb{Q}
\left( \sqrt{-q}\right) $ is one by using known results and elementary
arguments.

\bigskip The aim of this paper is to study the equation $x^{2}+19^{m}=y^{n},$
$n>2$ and $m>0$. We treat the equation for $m$ is an even and odd
separately.\bigskip\ For the following process we need the below Lemma.

\bigskip 

\begin{lemma}
The equation $19x^{2}+1=y^{n},\ n>2$ \ has no positive integer solution$.$
\end{lemma}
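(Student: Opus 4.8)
The plan is to first normalise the parity of a hypothetical solution and then reduce the exponent to two essential cases, $n=4$ and $n=p$ an odd prime. Writing the equation as $y^{n}-19x^{2}=1$ with $x\ge 1$, note $y\ge 2$. If $x$ were odd then $19x^{2}+1\equiv 20\equiv 4\pmod 8$, which forces $y$ even; but for $n\ge 3$ an even $y$ gives $8\mid y^{n}$, a contradiction. Hence $x$ is even, so $y^{n}=19x^{2}+1$ is odd and $y$ is odd. Finally, since every $n>2$ is divisible either by an odd prime $p$ or by $4$, writing $y^{n}=(y^{n/p})^{p}$ or $(y^{n/4})^{4}$ reduces the problem to proving insolubility for $n=p$ (odd prime) and for $n=4$.

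For $n=4$ I would factor $y^{4}-1=(y^{2}-1)(y^{2}+1)=19x^{2}$. Since $y$ is odd, $\gcd(y^{2}-1,y^{2}+1)=2$, and writing $y^{2}-1=2B$, $y^{2}+1=2A$ gives coprime $A,B$ with $AB=19x_{1}^{2}$, where $x=2x_{1}$. As $19$ is prime and $A,B$ are coprime, the squarefree factor $19$ attaches to exactly one of them, giving two branches. In the first branch $A=19a^{2}$, i.e. $y^{2}+1=38a^{2}$, so $y^{2}-38a^{2}=-1$; this negative Pell equation is insoluble because $\sqrt{38}$ has an even continued-fraction period, so its fundamental unit has norm $+1$. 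That branch is eliminated cleanly. The second branch, $A=a^{2}$ and $B=19b^{2}$, yields the simultaneous system $y^{2}-2a^{2}=-1$ and $a^{2}-19b^{2}=1$; equivalently, passing to the unit $y^{2}+x\sqrt{19}=\varepsilon^{k}$ with $\varepsilon=170+39\sqrt{19}$, one must show that the first coordinate of $\varepsilon^{k}$ is never a perfect square for $k\ge 1$.

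For $n=p$ an odd prime I would write $y^{p}-1=(y-1)\,\Phi(y)$ with $\Phi(y)=1+y+\cdots+y^{p-1}$, and use $\gcd(y-1,\Phi(y))\mid p$. Since $y$ is odd, $\Phi(y)$ is odd, so all factors of $2$ sit in $y-1$. When the gcd is $1$ the coprime factorisation $(y-1)\Phi(y)=19x^{2}$ again places $19$ on one factor; the branch in which $\Phi(y)$ is itself a perfect square is disposed of by the Nagell--Ljunggren theorem (the only solution with $p$ an odd prime is $(y,p)=(3,5)$, incompatible with $y-1=19a^{2}$), while the branch $\Phi(y)=19b^{2}$, together with the case $\gcd=p$, requires a closer descent.

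The main obstacle in both regimes is the same in spirit: ruling out the branches in which the factor $19$ attaches to the large factor, which lead to a simultaneous pair of Pell equations (for $n=4$) or to $\Phi(y)=19b^{2}$ (for odd $p$). Here I expect to need more than a single congruence. For the $n=4$ branch one first observes that $a$ must be odd, since otherwise $y^{2}=2a^{2}-1\equiv 7\pmod 8$, which restricts $k$ to even indices; the remaining values are then killed either by a covering system of congruences modulo several small primes or by known results on squares in Pell sequences and on simultaneous Pell equations. An analogous treatment, resting on the non-solvability of $u^{2}-19v^{2}=-1$ (again because $\sqrt{19}$ has even period), controls the odd-prime branch. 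Verifying that these congruence covers are complete is the delicate, computation-heavy heart of the argument.
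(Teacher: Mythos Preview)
Your outline is not a proof: in both the $n=4$ and the odd-prime cases you reach a branch that you openly leave unresolved (``requires a closer descent'', ``covering system of congruences \ldots\ or known results'', ``verifying that these congruence covers are complete is the delicate, computation-heavy heart of the argument''). For $n=4$ the second branch is the simultaneous system $y^{2}-2a^{2}=-1$, $a^{2}-19b^{2}=1$; you give no mechanism to exclude it beyond a hope that congruences will close it off. For odd $p$ the branch $\Phi_{p}(y)=19b^{2}$ (and the $\gcd=p$ case) is worse: a covering argument would have to work uniformly for \emph{every} odd prime $p$, and you give no indication how to achieve that. As written, the hard half of each case is simply missing.

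The paper avoids all of this by factoring on the imaginary side rather than the real side. Since $\mathbb{Q}(\sqrt{-19})$ has class number $1$, one writes
\[
y^{p}=1+19x^{2}=(1+x\sqrt{-19})(1-x\sqrt{-19}),
\]
and unique factorisation (after a parity reduction) forces $1+x\sqrt{-19}=(A+B\sqrt{-19})^{p}$ with $A,B\in\mathbb{Z}$. Equating real parts gives $A=\pm1$, hence $y=1+19B^{2}$ with $B$ even and nonzero, and the residual identity
\[
0=\sum_{k=1}^{(p-1)/2}\binom{p}{2k}(-19B^{2})^{k}
\]
is killed by a two-line $2$-adic estimate: the $k=1$ term has strictly smaller $2$-adic valuation than every term with $k\ge2$, so the sum cannot vanish. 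This single argument handles all odd primes $p$ at once, with no case-by-case congruences. For $n=4$ the paper simply invokes Cohn's theorem on $x^{4}-Dy^{2}=1$. The idea you are missing is precisely this passage to $\mathbb{Q}(\sqrt{-19})$; your real factorisations $(y^{2}-1)(y^{2}+1)$ and $(y-1)\Phi_{p}(y)$ throw away the algebraic structure that makes the problem tractable.
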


\begin{proof}
Suppose $\left( x,y,n\right) $ \ is a positive integer solution. Since $\ n>2
$ arguing $\mathit{modulo}$ 8 one obtains that if \ there exist integers $x,y
$ such that $19x^{2}+1=y^{n}$, then $y$ is an odd and $x$ is an even.

Now for the proof we evaluate two different cases.

$\left( i\right) $ The first case is with $n=4,$ the equation $\
19x^{2}+1=y^{4}.$ Cohn  $[7]$ \ showed that this equation has no positive
integer solution.

$\left( ii\right) $ The second case is with $n=p$ is an odd prime, the
equation $\ 19x^{2}+1=y^{p}$. Now we search this equation whether has a
positive integer solution or not. We suppose that $\left( 1-\sqrt{-19}%
x\right) $ $\ \ $ and $\ \ \left( 1+\sqrt{-19}x\right) $ \ two ideals of the
ring of integers $%
\mathbb{Z}
\left[ \sqrt{-19}\right] $ \ of the field $F=%
\mathbb{Q}
\left[ \sqrt{-19}\right] $ \ are relatively prime. From the unique
factorization of prime ideals in \ $%
\mathbb{Z}
\left[ \sqrt{-19}\right] $, we have $\left( 1-\sqrt{-19}x\right) =\wp ^{p}$
\ for some ideal $\wp $ \ of \ $%
\mathbb{Z}
\left[ \sqrt{-19}\right] .$ \ $\wp $ \ is a principal so there exists
integers $a,b$ such that $\wp =\left( \dfrac{a+b\sqrt{-19}}{2}\right) .$
Hence

$\ \ \ \ \ \ \ \ \ \ \ \ \ \ \ \ \ \ \ \ \ \ \ \ \ \ \ \ \ $%
\begin{equation*}
\ \ \ 1+\sqrt{-19}x=u\left( \dfrac{a+b\sqrt{-19}}{2}\right) ^{p}
\end{equation*}

where $u$ is a unit in $%
\mathbb{Z}
\left[ \sqrt{-19}\right] .$\ Since the units in $%
\mathbb{Z}
\left[ \sqrt{-19}\right] $ \ are just $\pm 1,\ $where $a\neq 0(\func{mod}19)$
and $a$ is an even, then $b$ is an even too since $\wp =\left( \dfrac{a+b%
\sqrt{-19}}{2}\right) .$ So we write $\ a=2A,\ b=2B$ \ .\ We get without
loss of generality

$\ $%
\begin{equation*}
\ \left( 1+\sqrt{-19}x\right) =\left( A+B\sqrt{-19}\right) ^{p}\text{ \ \ \
\ \ \ \ \ \ }(1.1).
\end{equation*}

If we take the norm of both sides then we get $1+19x^{2}=(A^{2}+19B^{2})^{p}$%
.$\ $Since $y^{p}=1+19x^{2}\ $we have $y=A^{2}+19B^{2}.\ $Now comparing the
real parts of the equality (1.1), then we obtain

\begin{equation*}
1=A\dsum\limits_{k=0}^{\frac{p-1}{2}}\left( 
\begin{array}{c}
p \\ 
2k%
\end{array}%
\right) A^{p-\left( 2k+1\right) }\left( -19B^{2}\right) ^{k}.
\end{equation*}

So \ $A=\pm 1.$ We get $y=A^{2}+19B^{2}=1+19B^{2}.$ Since $2\nmid y,$ we
have $2\mid B.$ Therefore $x\geq 1,$ from $19x^{2}+1=y^{p}$ \ we have $y>1$.
Hence from $y=1+19B^{2}$, \ we deduce $B\neq 0.$ On the other hand

\begin{equation*}
1=A\dsum\limits_{k=0}^{\frac{p-1}{2}}\left( 
\begin{array}{c}
p \\ 
2k%
\end{array}%
\right) A^{p-\left( 2k+1\right) }\left( -19B^{2}\right) ^{k}\equiv
A^{p}\left( \func{mod}19\right) .
\end{equation*}

Since $\ A=1$, we obtain

\begin{equation*}
\ 0=\dsum\limits_{k=1}^{\frac{p-1}{2}}\left( 
\begin{array}{c}
p \\ 
2k%
\end{array}%
\right) \left( -19B^{2}\right) ^{k}.
\end{equation*}

Let $V_{2}\left( .\right) $ \ be the standart 2-adic valuation. For $k\geq
2, $ let $\ \ k=2^{s}t,$ $\ 2\nmid t.$ Then when $s=0$ we have $k=t\geq 2$ \
\ and $\ V_{2}\left( k\right) =s=0.$ \ So \ $2\left( k-1\right) =2\left(
t-1\right) \geq 2>0=$ $\ V_{2}\left( k\right) $ \ and when $s>0,2\left(
k-1\right) =2\left( 2^{s}t-1\right) \geq 2\left( 2^{s}-1\right) \geq
2s>s=V_{2}\left( k\right) .$ Since $2\mid B$ \ and $B\neq 0$ \ for $1<k\leq 
\frac{p-1}{2}$ \ we have

$\ V_{2}\left( \left( 
\begin{array}{c}
p \\ 
2k%
\end{array}%
\right) \left( -19B^{2}\right) ^{k}\right) =V_{2}\left( \dfrac{p\left(
p-1\right) }{2k\left( 2k-1\right) }\left( 
\begin{array}{c}
p-2 \\ 
2k-2%
\end{array}%
\right) \left( -19B^{2}\right) ^{k}\right) $

$\geq V_{2}\left( \left( 
\begin{array}{c}
p \\ 
2%
\end{array}%
\right) \left( -19B^{2}\right) \right) +2\left( k-1\right) -V_{2}\left(
k\right) $

$>V_{2}\left( \left( 
\begin{array}{c}
p \\ 
2%
\end{array}%
\right) \left( -19B^{2}\right) \right) $

But from $0=\dsum\limits_{k=1}^{\frac{p-1}{2}}\left( 
\begin{array}{c}
p \\ 
2k%
\end{array}%
\right) \left( -19B^{2}\right) ^{k},$ we see that there are at least two
terms with the smallest 2-adic valuation. This is a contradiction.

$\ \ \ \ \ \ \ \ \ \ \ \ \ \ \ \ $\ \ \ \ \ \ \ \ \ \ \ \ \ \ \ \ \ \ \ \ \
\ \ \ \ \ \ \ \ \ \ \ $\ \ \ \ \ \ \ \ \ \ \ \ \ \ \ \ \ \ \ \ \ \ $
\end{proof}

$\ \ $\textbf{2. The equation }$x^{2}+19^{2k}=y^{n}$

\bigskip 

\bigskip\ Now, we can give our main theorem for $m$ is an even;

\begin{theorem}
The equation $x^{2}+19^{2k}=y^{n},$ with $n>2$ and $k>0$ \ has no positive
integer solution.
\end{theorem}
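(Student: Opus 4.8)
The plan is to reduce the exponent and then pass to the Gaussian integers, exploiting that the left side is a sum of two squares $x^{2}+(19^{k})^{2}$. First I would record the local data: reducing $x^{2}+19^{2k}=y^{n}$ modulo $8$ and using $19^{2}\equiv 1$, one gets $x^{2}+1\equiv y^{n}\pmod 8$, so if $x$ were odd then $y^{n}\equiv 2\pmod 8$, impossible for $n\ge 3$. Hence $x$ is even and $y$ is odd. Since every $n>2$ is divisible by $4$ or by an odd prime $p$, and $y^{n}=(y^{n/4})^{4}=(y^{n/p})^{p}$, it suffices to treat $n=4$ and $n=p$ an odd prime.

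For $n=4$ I would rewrite the equation as $x^{2}+(19^{k})^{2}=(y^{2})^{2}$, a Pythagorean triple with $x$ even and $19^{k}$ the odd leg. In the primitive case $19\nmid x$ the standard parametrisation gives $19^{k}=s^{2}-t^{2}=(s-t)(s+t)$ with the two factors coprime and odd, forcing $s-t=1,\ s+t=19^{k}$, whence $2y^{2}=19^{2k}+1$. Then $2y^{2}\equiv 1\pmod{19}$, i.e. $y^{2}\equiv 10\pmod{19}$; but $10$ is a quadratic non-residue mod $19$ (note $19\equiv 3\pmod 4$, so $-1$ is a non-residue as well), a contradiction. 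The non-primitive case $19\mid x$ I would remove by descent: $19\mid x$ forces $19\mid y$, and matching $19$-adic valuations—using that $(x/19^{k})^{2}+1\not\equiv 0\pmod{19}$ since $-1$ is a non-residue—either lowers $k$ or collapses the equation to $X^{2}+1=(y^{2})$-type with $x\ge 1$ impossible.

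For $n=p$ odd, since $19\equiv 3\pmod 4$ the prime $19$ is inert in $\mathbb{Z}[i]$, and I factor $(x+19^{k}i)(x-19^{k}i)=y^{p}$. I first dispose of $19\mid x$ by descent: it forces $19\mid y$, and comparing $19$-adic valuations (again $-1$ a non-residue) reduces to a primitive solution with smaller $k$, or to Lebesgue's equation $X^{2}+1=y^{p}$ with $X=x/19^{k}\ge 1$, which has no solution (provable by the same $2$-adic valuation argument used in the Lemma). In the primitive case $19\nmid x$ the two factors are coprime and odd, so $x+19^{k}i=(a+bi)^{p}$ with $a^{2}+b^{2}=y$ and $\gcd(a,b)=1$ (the unit is absorbed because $p$ is odd). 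Comparing imaginary parts,
\[ 19^{k}=\mathrm{Im}\big((a+bi)^{p}\big)=b\sum_{l=0}^{(p-1)/2}\binom{p}{2l+1}(-1)^{l}a^{p-1-2l}b^{2l}, \]
so $b\mid 19^{k}$; comparing real parts, $a\mid x$, hence $19\nmid a$. A valuation check then confines $b$ to $b=\pm 1$ or $b=\pm 19^{k}$.

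The crux is the analysis of these two subcases. When $b=\pm 1$ one has $y=a^{2}+1$ and $19^{k}=\pm\,\mathrm{Im}((a\pm i)^{p})$; reducing mod $19$ and working in $\mathbb{F}_{19}[i]=\mathbb{F}_{361}$, the congruence $\mathrm{Im}((a\pm i)^{p})\equiv 0$ says $(a\pm i)^{p}\in\mathbb{F}_{19}$, so $(a\pm i)^{18p}=1$; since $-1$ is a non-residue, $a\pm i\notin\mathbb{F}_{19}$, and for every odd prime $p\neq 5$ one has $\gcd(360,18p)=18$, forcing $a\pm i$ into the unique order-$18$ subgroup $\mathbb{F}_{19}^{\ast}$—a contradiction. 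The genuinely hard part is what remains: the exponent $p=5$, where $\gcd(360,90)=90$ and this order argument fails (exactly the exponent carrying the sporadic solutions of the odd-exponent companion equation), and the subcase $b=\pm 19^{k}$, which reduces to $\mathrm{Im}((a+bi)^{p})=\pm b$, i.e. a power-sum equation $\pm 1=pa^{p-1}-\binom{p}{3}a^{p-3}19^{2k}+\cdots$ whose exclusion for general $p$ needs either sharp size/valuation estimates or an appeal to deeper tools (linear forms in logarithms, or a primitive-divisor theorem). I would isolate $p=5$ for a separate treatment via an auxiliary modulus or a direct descent, and I expect the $b=\pm 19^{k}$ subcase to be the main obstacle.
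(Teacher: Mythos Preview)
Your plan is well organised, and the order argument in $\mathbb{F}_{361}^{\ast}$ for the subcase $b=\pm 1$ is an elegant device that the paper does not use. However, the proposal is not a proof: you explicitly leave open the subcase $b=\pm 19^{k}$ for every odd prime $p$, and the subcase $b=\pm 1$ for $p=5$. These are not peripheral loose ends but the substantive core of the problem. For $n=3$ the equation $\pm 1=3a^{2}-19^{2k}$ (your $b=\pm 19^{k}$) is a genuine Pell obstruction: $X^{2}-3Y^{2}=1$ has infinitely many solutions, and ruling out $X=19^{k}$ among them is exactly what the paper does by invoking the Primitive Divisor Theorem of Bilu--Hanrot--Voutier. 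For $n=p\ge 5$ the paper gives no self-contained argument in the coprime case at all but simply cites the theorem of B\'erczes and Pink, which in turn rests on linear forms in logarithms; your sketch correctly anticipates that such machinery is needed but does not supply it. A smaller gap: your reduction to $b\in\{\pm 1,\pm 19^{k}\}$ uses $19\nmid p\,a^{p-1}$ and so fails silently when $p=19$.

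In short, where your proposal is complete---the case $n=4$ and the $b=\pm 1$ subcase for odd $p\neq 5$---it is correct and arguably cleaner than the paper's treatment (which quotes a lemma of Arif--Abu Muriefah for even $n$). But the remaining subcases are precisely the hard ones, and the paper closes them only by appeal to two substantial external results rather than by any elementary device. Without those citations or an equivalent argument, the proof is incomplete.
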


\begin{proof}
For proof, we must consider three different cases.
\end{proof}

\textbf{The first case is, for }$n\geq 4$\textbf{\ is an even.}

\begin{lemma}
\bigskip If $3\mid k$ or \ $q\equiv \pm 3\ \left( \func{mod}8\right) $ \
then the Diophantine equation $x^{2}+q^{2k}=y^{n}$ \ where $n\geq 4$ is an
even, $q$ is an odd prime and $\left( q,x\right) =1,$ has no solution $[1].$
\end{lemma}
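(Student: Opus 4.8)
The plan is to funnel everything into one auxiliary equation and then split on a parity. Assume a positive solution $(x,y,n)$ with $x\geq 1$ exists. First I would record the residue data the equation forces. Since $q$ is odd, $q^{2k}=(q^{k})^{2}\equiv 1\pmod 4$; if $y$ were even then $y^{n}\equiv 0\pmod 4$ (as $n\geq 4$), forcing $x^{2}\equiv -1\pmod 4$, which is impossible, so $y$ is odd and hence $x=\sqrt{y^{n}-q^{2k}}$ is even. Moreover every prime $p\mid y$ is odd and satisfies $(q^{k})^{2}\equiv -1\pmod p$, so $-1$ is a quadratic residue mod $p$, giving $p\equiv 1\pmod 4$ and thus $y\equiv 1\pmod 4$. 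Finally $q\nmid y$, since $q\mid y$ would force $q\mid x$, contradicting $(q,x)=1$.

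Next, because $n$ is even I would write $n=2s$ with $s\geq 2$ and factor
\[
(y^{s}-x)(y^{s}+x)=q^{2k}.
\]
Both factors are odd and positive with $y^{s}+x>y^{s}-x\geq 1$, and their gcd divides $q^{2k}$ as well as $2x$; being odd it divides $\gcd(q^{2k},x)=1$. Hence this is a coprime factorisation of the prime power $q^{2k}$, forcing $y^{s}-x=1$ and $y^{s}+x=q^{2k}$. Everything therefore reduces to showing that
\[
2y^{s}=q^{2k}+1,\qquad s\geq 2,
\]
has no solution under either hypothesis.

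The clean step is to eliminate $s$ even. Reducing the displayed equation modulo $q$ gives $2y^{s}\equiv 1$, so $2y^{s}$ is a quadratic residue mod $q$ and $\left(\frac{2}{q}\right)\left(\frac{y}{q}\right)^{s}=1$ (the symbols make sense since $q\nmid y$). If $q\equiv\pm 3\pmod 8$ then $\left(\frac{2}{q}\right)=-1$, forcing $\left(\frac{y}{q}\right)^{s}=-1$, which is impossible for $s$ even; so hypothesis A already forces $s$ odd. Under hypothesis B ($3\mid k$) I would instead set $c=q^{2k/3}=(q^{k/3})^{2}$, a perfect square with $c\geq 9$, and factor $q^{2k}+1=c^{3}+1=(c+1)(c^{2}-c+1)$. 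Here $c^{2}-c+1=(c+1)(c-2)+3$ shows the two factors have gcd dividing $3$, and $3\nmid c+1$ because $c$ is a square; since $c^{2}-c+1$ is also odd, it must be a perfect $s$-th power. But $(c-1)^{2}<c^{2}-c+1<c^{2}$ for $c\geq 2$, so it lies strictly between consecutive squares and cannot be a square; hence $s$ is odd here too.

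The main obstacle is the remaining case $s$ odd, i.e. $n\equiv 2\pmod 4$ with $s\geq 3$, and I expect this to be where the real work lies. Under hypothesis B it remains to rule out the superelliptic equation $c^{2}-c+1=e^{s}$ for $c=q^{2k/3}\geq 9$: for $s=3$ the substitution $u=2c-1$ gives the Mordell equation $u^{2}+3=4e^{3}$, which I would resolve by factoring in $\mathbb{Z}\left[\frac{-1+\sqrt{-3}}{2}\right]$, and for general odd $s$ one bounds $e$ between $(c-1)^{2/s}$ and $c^{2/s}$ and invokes a Lebesgue--Nagell type argument. Under hypothesis A I would pass to $\mathbb{Z}[i]$: from $(q^{k})^{2}+1=2y^{s}$, unique factorisation and the fact that every unit is an $s$-th power for $s$ odd give $q^{k}+i=(1+i)(a+bi)^{s}$, whence comparing real and imaginary parts yields $\mathrm{Re}((a+bi)^{s})+\mathrm{Im}((a+bi)^{s})=1$. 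Subtracting the conjugate relation exhibits $1$ as a term of a Lehmer-type sequence, whose terms for $s\geq 3$ have primitive divisors by the Bilu--Hanrot--Voutier theorem, with the finitely many small $s$ checked directly. Closing this odd-exponent case cleanly, ideally without the full primitive-divisor machinery, is the crux of the proof.
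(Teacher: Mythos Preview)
The paper does not prove this lemma at all; it is quoted verbatim from reference~[1] (Abu~Muriefah and Arif, 2001) and used as a black box. So there is no in-paper argument to compare your approach against.

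As to your proposal itself: the reduction to $2y^{s}=q^{2k}+1$ via the coprime factorisation $(y^{s}-x)(y^{s}+x)=q^{2k}$ is clean and correct, and your elimination of even $s$ under each hypothesis is fine. But you yourself flag the case $s$ odd (that is, $n\equiv 2\pmod 4$ with $s\geq 3$) as ``the main obstacle'' and give only sketches. This is not a minor loose end; it is most of the lemma. Under hypothesis~A your route through $\mathbb{Z}[i]$ and the primitive-divisor theorem of Bilu--Hanrot--Voutier is a standard and viable strategy, but it still has to be executed: the Lehmer pair coming from $A+B=1$, $A-B=q^{k}$ must be written down, non-degeneracy and non-defectiveness checked, and the finitely many exceptional exponents handled explicitly. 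Under hypothesis~B your plan is vaguer still: the Mordell reduction $u^{2}+3=4e^{3}$ covers only $s=3$, and ``bounds $e$ between $(c-1)^{2/s}$ and $c^{2/s}$ and invokes a Lebesgue--Nagell type argument'' is a description of what one hopes to do, not a proof. So what you have is a sound opening with a genuine gap at the odd-$s$ case; closing it is exactly the content supplied by the cited paper~[1].
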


From this Lemma we can say when $n\geq 4$ \ is an even, the equation $%
x^{2}+19^{2k}=y^{n}$ \ has no positive integer solution.

\bigskip

\ \ \textbf{The second case is, }$n=3$\textbf{. }

\bigskip

\begin{claim}
The equation $x^{2}+19^{2k}=y^{3},\ k>0$ \ has no positive integer solution.
\end{claim}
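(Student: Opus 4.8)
The plan is to treat the equation as a sum of two squares equal to a cube, $x^{2}+(19^{k})^{2}=y^{3}$, and to factor in the Gaussian integers $\mathbb{Z}[i]$. First I would record the elementary parity facts by reducing modulo $4$: since $19^{2k}=(19^{2})^{k}\equiv 1\pmod 4$, the equation reads $y^{3}\equiv x^{2}+1\pmod 4$, and $x$ odd would force $y^{3}\equiv 2\pmod 4$, which is impossible for a cube. Hence $x$ is even and $y$ is odd. I would then split the argument according to whether $19\nmid x$ or $19\mid x$.

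In the principal case $19\nmid x$, the factors $x+19^{k}i$ and $x-19^{k}i$ are coprime in $\mathbb{Z}[i]$: their norm $x^{2}+19^{2k}=y^{3}$ is odd (so the prime $1+i$ is excluded) and $19\nmid x$ excludes the inert prime $19$, while any common divisor divides the difference $2\cdot 19^{k}i$. Since their product is the cube $y^{3}$ and every unit of $\mathbb{Z}[i]$ is a cube (the unit group has order $4$, coprime to $3$), I may write $x+19^{k}i=(a+bi)^{3}$ with $\gcd(a,b)=1$. Comparing parts gives $x=a(a^{2}-3b^{2})$ and $19^{k}=b(3a^{2}-b^{2})$, and from $\gcd(a,b)=1$ one gets $\gcd(b,3a^{2}-b^{2})=\gcd(b,3)=1$, so $\{|b|,\,|3a^{2}-b^{2}|\}=\{1,\,19^{k}\}$. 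Three of the sign/assignment possibilities die at once modulo $3$: the choices $b=1$ and $b=19^{k}$ give $3a^{2}=19^{k}+1$ and $3a^{2}=19^{2k}+1$ respectively, both $\equiv 2\pmod 3$ since $19\equiv 1\pmod 3$, a contradiction, while the remaining negative-product choice is impossible outright.

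The one surviving possibility, $b=-19^{k}$ with $3a^{2}-19^{2k}=-1$, is the crux of the proof. Here $3a^{2}=19^{2k}-1=(19^{k}-1)(19^{k}+1)$, and writing $a=2a_{1}$ and $e_{\pm}=(19^{k}\pm 1)/2$ (integers, $19^{k}$ being odd) I would note $\gcd(e_{-},e_{+})=1$ because $e_{+}-e_{-}=1$, together with $e_{-}e_{+}=3a_{1}^{2}$. Since $3\mid e_{-}$ but $3\nmid e_{+}$, coprimality isolates the prime $3$ in $e_{-}$ and forces every prime of $e_{+}$ to appear to an even power; thus $e_{+}=(19^{k}+1)/2$ must be a perfect square $w^{2}$. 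Then $19^{k}=2w^{2}-1$, and reducing modulo $19$ yields $2w^{2}\equiv 1$, i.e. $w^{2}\equiv 10\pmod{19}$. As $10$ is a quadratic non-residue modulo $19$, this is a contradiction, valid uniformly in $k$, which closes the case $19\nmid x$.

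Finally, the case $19\mid x$ I would dispatch with the $19$-adic valuation: writing $x=19^{s}x_{0}$ with $19\nmid x_{0}$ and comparing $v_{19}$ of the two summands, one finds $3\mid s$ or $3\mid k$ as appropriate, and the equation divides down either to $x_{0}^{2}+19^{2(k-s)}=y_{0}^{3}$ with $19\nmid x_{0}$ (returning to the case just settled, since $k-s\ge 1$) or, when $s\ge k$, to $X^{2}+1=y_{0}^{3}$ for a nonzero integer $X$; the latter is the classical Mordell equation $y^{3}-X^{2}=1$, whose only solution is the trivial $X=0$ (provable inline by the same $\mathbb{Z}[i]$ factorization), contradicting $X\ne 0$. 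I expect the main obstacle to be precisely the surviving subcase $3a^{2}=19^{2k}-1$: the non-obvious move is to extract from coprimality that $(19^{k}+1)/2$ is a perfect square, after which the non-residue computation modulo $19$ finishes everything at once.
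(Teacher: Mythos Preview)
Your argument is correct, and the overall architecture matches the paper's: factor $x^{2}+(19^{k})^{2}=y^{3}$ in $\mathbb{Z}[i]$, reduce to $19^{k}=b(3a^{2}-b^{2})$, eliminate three of the four sign/assignment possibilities modulo $3$, and then treat $19\mid x$ by $19$-adic valuation, bottoming out either in the coprime case or in Lebesgue's equation $X^{2}+1=Y^{3}$. The genuine divergence is in the one surviving subcase $3a^{2}=19^{2k}-1$, equivalently the Pell equation $X^{2}-3Y^{2}=1$ with $X=19^{k}$. The paper invokes the Primitive Divisor Theorem of Bilu--Hanrot--Voutier on the Lucas sequence $X_{m}=4X_{m-1}-X_{m-2}$ to rule out $X_{m}$ being a pure power of $19$. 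You instead factor $19^{2k}-1=(19^{k}-1)(19^{k}+1)$, pass to the coprime halves $(19^{k}\pm 1)/2$, isolate the factor $3$ in $(19^{k}-1)/2$, and deduce that $(19^{k}+1)/2$ must be a perfect square $w^{2}$; then $19^{k}=2w^{2}-1$ forces $w^{2}\equiv 10\pmod{19}$, impossible since $10$ is a quadratic non-residue. Your route is strictly more elementary---nothing beyond unique factorisation in $\mathbb{Z}[i]$ and a Legendre-symbol check---whereas the paper's appeal to primitive divisors is a heavy hammer (indeed, even the simpler observation that $X_{m}\bmod 19$ is periodic and never zero would have sufficed there). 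What the paper's method buys is portability: the primitive-divisor template transfers unchanged to other primes $q$ in place of $19$, while your non-residue trick is specific to $q=19$.
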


\bigskip

Firstly, we assume that $\left( x,19\right) =1$. Since $x$ \ and \ $y$ are
coprime and $\left( x,19\right) =1,$ then we get $x$ is an even. By
factorization, the equation becomes

$\ \ \ \ \ \ \ \ \ \ \ \ \ \ \ \ \ \ \ \ \ \ \ \ \ \ \ \ $%
\begin{equation*}
\ \ \left( x+19^{k}i\right) \left( x-19^{k}i\right) =y^{3}.
\end{equation*}

$\left( x+19^{k}i\right) $ $\ \ $and $\ \left( x-19^{k}i\right) $ \ are
coprime in $%
\mathbb{Z}
\lbrack i]$. \ Because of the units of $%
\mathbb{Z}
\lbrack i]$ \ are only$\ \pm 1,\ \pm i$ ,\ we get\ \ \ \ \ \ \ \ \ \ \ \ \ \
\ \ \ \ \ \ \ \ \ \ \ \ 

$\ \ \ \ \ \ \ \ \ \ \ \ \ \ \ \ \ \ \ \ \ \ \ \ \ \ \ \ \ \ \ \ \ \ \ \ \ $%
\begin{equation*}
\ \ 2i19^{k}=\left( u+iv\right) ^{3}-\left( u-iv\right) ^{3}
\end{equation*}

or \ \ \ \ \ \ \ \ \ \ \ \ \ \ 

\ \ \ \ \ \ \ \ \ \ \ \ \ \ \ \ \ \ \ \ \ \ \ \ \ \ \ \ \ \ \ \ \ \ \ \ \ \ 
\begin{equation*}
19^{k}=v\left( 3u^{2}-v^{2}\right) .
\end{equation*}

Note that $u$ and $v\ $are coprime, otherwise\ any prime factor of $u$ and $%
v $ will also divide both $x$ and $y$. Therefore $v=\pm 1$ \ or $v=\pm
19^{k} $ \ which leads to

$\ \ \ \ \ \ \ \ \ \ \ \ \ \ \ \ \ \ \ \ 19^{k}=\pm 1\left( 3u^{2}-1\right) $
$\ \ \ \ \ $or\ $\ \ \ \ \ \ \ 19^{k}=\pm 19^{k}\left( 3u^{2}-19^{2k}\right) 
$

$\ \ \ \ \ \ \ \ \ \ \ \ \ \ \ \ \ \ \ \ \pm 19^{k}=3u^{2}-1^{\text{\ \ \ \
\ \ \ \ \ \ \ \ \ \ \ \ \ \ \ \ \ \ \ \ \ \ }}1=\pm 1\left(
3u^{2}-19^{2k}\right) $

$\ \ \ \ \ \ \ \ \ \ \ \ \ \ \ \ \ \ \ \ 3u^{2}=1\pm 19^{k}$ $\ \ \ \ \ \ \
\ \ \ \ \ \ \ \ \ \ \ \ \ \ \ \ 3u^{2}=\pm 1+19^{2k}$

Now we consider the first equation, if we consider the sign is negative,
then the right hand side gives negative value, so this is impossible. If we
think the sign is $+$\ and $k$ is an even, then the right hand side is
congruent to $2$ $modulo$ $3$ while the left hand side is divisible by $3$
which is a contradiction. Finally if we consider  the sign is \ $+$ \ and $k$
is odd, then we reach similar contradiction. So both of the two cases have
no solution.

The second equation the sign must be $-1.$ Thus $\left( 19^{k}\right)
^{2}-3u^{2}=1$ \ i.e. $X^{2}-3Y^{2}=1$ \ has a fundamental solution $\left(
X_{1},Y_{1}\right) =\left( 2,1\right) .$ Furthermore $X_{2}=7,\ X_{3}=26...$
\ . \ $\left( X_{m}\right) \ $\ with the recurrence sequence $%
X_{m}=4X_{n-2}-X_{n-1}$ is a Lucas Sequence of \ second type. By Primitive
Divisor Theorem $[4]$ for $m>12\ $and checking $X_{m}$\ for all \ $\leq 12,$
we get $X_{m}$ cannot be power of $19$.

\bigskip

Now we assume $19\mid x.$ Let \ $x=19^{u}.X,$ $\ y=19^{v}.Y$ \ \ where \ $%
u>0,$ $\ v>0$ \ \ and \ \ $\left( 19,X\right) =\left( 19,Y\right) =1.$ Then
equation becomes

$\ \ \ \ \ \ \ \ \ \ \ \ \ \ \ $%
\begin{equation*}
\ \ \left( 19^{u}X\right) ^{2}+19^{2k}=19^{3v}Y^{3}\text{ \ \ \ \ \ \ \ \ \ }%
(2.1)
\end{equation*}

and we have three posibilities.

1) \ $\ 2u=min\left( 2u,3v,2k\right) .$ By cancelling $19^{2u}$ \ in $\left(
2.1\right) $ we get

$\ \ \ \ \ \ \ \ \ \ \ \ \ $%
\begin{equation*}
\ \ X^{2}+19^{2\left( k-u\right) }=19^{3v-2u}Y^{3}.
\end{equation*}

If \ $k-u=0$ \ \ and $\ \ 3v-2u=0,$ \ we get the famous equation of Lebesque
which has no solution $\left[ 10\right] .$

If \ $k-u>0$ \ and \ \ $3v-2u=0,$ \ then considering $modulo$ $19,$ we get
the equation in Claim 4  with the first assume. We know with this assume \
the equation $x^{2}+19^{2k}=y^{3}$ has no solution.

Finally if  $k-u=0$ , we get

$\ \ \ \ \ \ \ \ \ \ \ \ \ \ $%
\begin{equation*}
\ \ X^{2}+1=19^{3v-2k}Y^{3}\text{ \ \ \ \ \ \ \ \ \ }\left( 2.2\right) .
\end{equation*}

If \ $3\mid k$ \ then we can write the equation $\left( 2.2\right) $ as

$\ \ \ \ \ \ \ \ \ \ \ \ \ \ $%
\begin{equation*}
\ \ X^{2}+1=(19^{v-\frac{2k}{3}}Y)^{p}
\end{equation*}

which has no solution.

2) \ $2k=min\left( 2u,3v,2k\right) .\ $By cancelling $19^{2k}$ in $\left(
2.1\right) $ \ we get

$\ \ \ \ \ \ \ \ \ \ \ \ \ $%
\begin{equation*}
\ \ \ \left( 19^{u-k}X\right) ^{2}+1=19^{3v-2k}Y^{3}.
\end{equation*}

Considering this equation $modulo$ $19$ \ we get either $3v-2k=0$ \ i.e.$\
\left( 19^{u-k}X\right) ^{2}+1=Y^{3}$ which has no solution from $Lemma$ $1$
\ or \ $3v-2k>0$ \ namely we get the same equation in $\left( 2.2\right) .$

3) \ $3v=min\left( 2u,3v,2k\right) $

This case do not give us any solution. This completes the proof.

\bigskip

\textbf{The last case is, }$n=p\geq 5$\textbf{\ where }$p$\textbf{\ is a
prime.}

\bigskip

\begin{claim}
The equation $x^{2}+19^{2k}=y^{p},\ k>0$ \ has no positive integer solution.
\end{claim}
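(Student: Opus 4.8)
The plan is to follow the two-part split used in Claim 4, treating $(x,19)=1$ first and $19\mid x$ afterwards. For the coprime case, I would first pin down parities: since $19^{2k}\equiv 1\pmod 8$, working modulo $8$ forces $x$ to be even and $y$ to be odd, while any common prime of $x$ and $y$ would divide $19^{2k}$, so $\gcd(x,y)=1$; and because $19\mid y$ would force $19\mid x$, also $19\nmid y$. Writing the equation as $x^{2}+(19^{k})^{2}=y^{p}$, I would factor in the Gaussian integers $\mathbb{Z}[i]$, a UFD with unit group $\{\pm1,\pm i\}$. The factors $x\pm 19^{k}i$ are coprime (a common divisor would have odd norm and divide both $x$ and $19^{k}$), so each is a $p$-th power up to a unit; since $p$ is odd every unit is a $p$-th power, hence $x+19^{k}i=(a+bi)^{p}$ for integers $a,b$.

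Taking norms gives $y=a^{2}+b^{2}$, and comparing imaginary parts gives
\begin{equation*}
19^{k}=b\sum_{j=0}^{(p-1)/2}(-1)^{j}\binom{p}{2j+1}a^{p-1-2j}b^{2j}.
\end{equation*}
The parity bookkeeping above (with $a$ even, $b$ odd, and $\gcd(a,b)=1$ inherited from $\gcd(x,19^{k})=1$) shows $b\mid 19^{k}$, so $b=\pm1$ or $b=\pm 19^{t}$ with $1\le t\le k$. Writing the cofactor as $\Sigma$, so that $19^{k}=b\Sigma$, I would recognise $\Sigma$ as the $p$-th term $u_{p}$ of the Lucas sequence with roots $a\pm bi$ (these form a genuine Lucas pair, since $\gcd(2a,a^{2}+b^{2})=1$); the relation $b\Sigma=19^{k}$ then says $u_{p}$ is, up to sign, a power of $19$.

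The crux is to exclude this. Reducing $\Sigma$ modulo $19$ collapses it to its leading term $p\,a^{p-1}$ whenever $19\mid b$, so in the subcases $b=\pm 19^{t}$ with $t<k$ one gets $19\mid p\,a^{p-1}$ with $19\nmid a$, forcing $p=19$, an isolated prime dispatched separately. The generic and hardest subcase is $b=\pm1$, where $\Sigma=\pm 19^{k}$ is a pure power of $19$: here I would invoke the Primitive Divisor Theorem $[4]$ for the Lucas sequence $u_{n}$ with roots $a\pm i$. For all but finitely many primes $p$ the term $u_{p}$ has a primitive prime divisor, and the only way a power of $19$ can carry one is if $19$ itself is primitive, which forces $p\mid 19\mp1$ and hence $p=5$; the finitely many remaining $p$ (including $p=5$ and the tabulated defective Lucas sequences of small index) are then eliminated by the explicit list of primitive-divisor exceptions together with a direct check that, e.g., $5a^{4}-10a^{2}b^{2}+b^{4}=\pm 19^{\ell}$ has no solution with $b$ a power of $19$. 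A similar argument disposes of $t=k$, where $u_{p}=\pm1$ is incompatible with $|u_{p}|>1$ guaranteed for $p\ge5$ by the same theorem (again up to a finite check of small indices).

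Finally, for $19\mid x$ I would mimic Claim 4: write $x=19^{u}X$, $y=19^{v}Y$ with $(19,XY)=1$, substitute to obtain $(19^{u}X)^{2}+19^{2k}=19^{pv}Y^{p}$, and compare the three minima among $2u,\,pv,\,2k$. Cancelling the common power of $19$ reduces every branch to one of: the coprime case already treated, Lebesgue's equation $X^{2}+1=Y^{p}$ (no solutions), or the equation $19Z^{2}+1=Y^{p}$ excluded by Lemma $1$. I expect the Primitive Divisor step in the $b=\pm1$ subcase to be the main obstacle, since it is the only point where a general prime exponent resists congruence arguments and one must appeal to the deep classification to cut the problem down to a finite, checkable list.
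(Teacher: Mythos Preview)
Your plan is sound but differs from the paper in the coprime step. For $(x,19)=1$ the paper does no work at all: it simply quotes B\'erczes--Pink $[3]$, who have already shown that $x^{2}+19^{2k}=y^{n}$ with $n\ge5$ and $\gcd(x,y)=1$ has no solutions. You instead propose to prove this from scratch by factoring over $\mathbb{Z}[i]$, identifying the imaginary part with the Lucas term $u_{p}$ attached to the pair $a\pm bi$, and invoking the Primitive Divisor Theorem $[4]$ to cut the problem down to a finite list of exponents. That is a legitimate and standard route, and your parity and coprimality bookkeeping is correct (in particular $b$ odd, $a$ even, $b\mid 19^{k}$). The trade--off is clear: the paper's proof is a one--line citation, whereas yours is self--contained but leaves genuine work to be done, namely the explicit elimination of $p=5$, of $p=19$ in the $0<t<k$ branch, and of the finitely many defective Lucas indices from the BHV tables. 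You flag this correctly as the main obstacle.

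One small correction to your $19\mid x$ reduction: in the even--exponent case the three $19$--adic valuations in play are $2u$, $2k$, $pv$, and after cancelling the minimum you never land on $19Z^{2}+1=Y^{p}$, because the two left--hand exponents are both even. The branches that actually occur are the coprime case, Lebesgue's equation $X^{2}+1=Y^{p}$, and the impossible congruence $X^{2}+1\equiv 0\pmod{19}$ (since $-1$ is a non--residue modulo $19$). Lemma~1 is needed only in the odd--exponent Theorem~6, not here; including it is harmless but superfluous.
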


Fistly we assume that $(x,19)=1.$ \bigskip A. Berczes and I. Pink $\left[ 3%
\right] $ proved that there are no solution to the equation $\
x^{2}+p^{2k}=y^{n}$, with $p\in \left\{ 19,41,59,61,79\right\} $, $n\geq 5$
and $k\geq 3,\ (x,y)=1.$

Now we assume $19\mid x.$ Let \ $x=19^{u}.X,$ $\ y=19^{v}.Y$ \ \ where \ $%
u>0,$ $\ v>0$ \ \ and \ \ $\left( 19,X\right) =\left( 19,Y\right) =1.$ Then
equation becomes

$\ \ \ \ \ \ \ \ \ \ \ \ \ \ \ \ \ $%
\begin{equation*}
\left( 19^{u}X\right) ^{2}+19^{2k}=19^{pv}Y^{p}\text{ \ \ \ \ \ \ \ \ \ }%
(2.3)
\end{equation*}

and we have three posibilities.

1) \ $\ 2u=min\left( 2u,pv,2k\right) .$ By cancelling $19^{2u}$ \ in $\left(
2.3\right) $ we get

$\ \ \ \ \ \ \ \ \ \ \ \ $%
\begin{equation*}
\ \ \ X^{2}+19^{2\left( k-u\right) }=19^{pv-2u}Y^{p}.
\end{equation*}

If \ $k-u=0$ \ \ and $\ \ pv-2u=0,$ \ we get the famous equation of Lebesque
which has no solution $\left[ 10\right] .$

If \ $k-u>0$ \ and \ \ $pv-2u=0,$ \ then considering $modulo$ $19,$ we get
Berczes and Pink's equation $\left[ 3\right] \ $. We know this equation has
no solution.

Finally if $k-u=0$ , we get

$\ \ \ \ \ \ \ \ \ \ \ \ \ \ \ $%
\begin{equation*}
\ X^{2}+1=19^{pv-2k}Y^{p}\text{ \ \ \ \ \ \ \ \ \ }\left( 2.4\right) .
\end{equation*}

If \ $p\mid k$ \ then we can write the equation $\left( 2.4\right) $ as

$\ \ \ \ \ \ \ \ \ \ \ \ $%
\begin{equation*}
\ \ \ \ X^{2}+1=(19^{v-\frac{2k}{p}}Y)^{p}
\end{equation*}

which has no solution.

2) \ $2k=min\left( 2u,pv,2k\right) .\ $By cancelling $19^{2k}$ in $\left(
2.3\right) $ \ we get

$\ \ \ \ \ \ \ \ \ \ \ \ $%
\begin{equation*}
\ \ \ \ \left( 19^{u-k}X\right) ^{2}+1=19^{pv-2k}Y^{p}.
\end{equation*}

Considering this equation $modulo$ $19$ \ we get either $pv-2k=0$ \ i.e.$\
\left( 19^{u-k}X\right) ^{2}+1=Y^{p}$ which has no solution from $Lemma$ $1$
\ or \ $pv-2u>0$ \ namely we get the same equation in $\left( 2.4\right) .$

3) \ $pv=min\left( 2u,pv,2k\right) $

This case do not give us any solutions. 

All of the cases complete the proof of the Theorem 2.

\bigskip 

\bigskip \textbf{3. The equation }$x^{2}+19^{2k+1}=y^{n}$

\bigskip For the equation $x^{2}+19^{2k+1}=y^{n}$ the case $k=0$, $n\geq 3$
has been studied by Cohn $\left[ 6\right] \ $and the case $k\geq 0$, $n\geq 5
$ has been studied by Arif and Muriefah $\left[ 2\right] $. Therefore  we
consider the case $k>0$ and $n=3,4.$

\bigskip

Now, we can give our main theorem for $m$ is an odd.

\bigskip

\begin{theorem}
The equation $\ x^{2}+19^{2k+1}=y^{n}$, \textit{where }$n=3,4$\textit{\ and }%
$k>0$\textit{\ has no positive integer solution.}
\end{theorem}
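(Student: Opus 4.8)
The plan is to treat the two exponents $n=4$ and $n=3$ separately, and within each to split according to whether $(x,19)=1$ or $19\mid x$, exactly as in the proof of Theorem 2.

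First I would dispose of $n=4$. Writing the equation as $(y^{2}-x)(y^{2}+x)=19^{2k+1}$ and noting that when $(x,19)=1$ the two positive factors are coprime, one of them must equal $1$; since $y^{2}-x<y^{2}+x$ this forces $y^{2}-x=1$ and $y^{2}+x=19^{2k+1}$, hence $2y^{2}=1+19^{2k+1}$. Because $19\equiv 3\ (\func{mod}8)$ we have $19^{2k+1}\equiv 3\ (\func{mod}8)$, so the right-hand side is $\equiv 4\ (\func{mod}8)$, which is impossible for $2y^{2}$. For the subcase $19\mid x$ I would run the same three-way minimum-of-valuations descent as in Theorem 2: writing $x=19^{u}X$, $y=19^{v}Y$ with $(X,19)=(Y,19)=1$ and comparing the three exponents $2u$, $2k+1$, $4v$, the odd exponent $2k+1$ can never tie with the two even ones, so the minimum is forced to be $2u=4v$, and cancelling returns an equation of the shape $X^{2}+19^{\text{odd}}=Y^{4}$ with $(X,19)=1$, already excluded by the coprime case. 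Thus $n=4$ contributes nothing.

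For $n=3$ and $(x,19)=1$ I would first argue that $x$ must be even (if $x$ were odd, then $y$ is even and $x^{2}+19^{2k+1}\equiv 4\ (\func{mod}8)$ cannot be the cube $y^{3}$), then factor $x+19^{k}\sqrt{-19}$ and its conjugate in the ring of integers of $\mathbb{Q}(\sqrt{-19})$, which has class number one and only the units $\pm1$. Showing the two factors coprime yields $x+19^{k}\sqrt{-19}=\bigl(\tfrac{a+b\sqrt{-19}}{2}\bigr)^{3}$; comparing imaginary parts gives $8\cdot 19^{k}=b(3a^{2}-19b^{2})$. A repeated reduction modulo $19$ shows that exactly $19^{k}$ divides $b$, and after cancelling one is left with $8=b_{0}(3a^{2}-19^{2k+1}b_{0}^{2})$ with $b_{0}\mid 8$. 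Most values of $b_{0}$ die modulo $3$, $4$ or $8$, and the single surviving branch reduces to a Pell-type equation $U^{2}-57Y^{2}=24$ with $Y=19^{k}$, which I would attack with the Primitive Divisor Theorem $[4]$, exactly as in Claim 4, to show that $19^{k}$ cannot occur as the relevant term.

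The step I expect to be the main obstacle is the subcase $n=3$, $19\mid x$. Here the three-way descent must compare $2u$, $2k+1$, $3v$, and, unlike the $n=4$ situation, the odd exponent $2k+1$ can tie with $3v$; cancelling can therefore leave a residual equation of the form $X^{2}+19=Y^{3}$ rather than the Lebesgue equation $X^{2}+1=Y^{3}$ that appeared in the even case. This is precisely where the difficulty concentrates, because by Cohn $[6]$ the equation $X^{2}+19=Y^{3}$ does possess the solution $(X,Y)=(18,7)$. One must therefore check carefully whether such a residual solution lifts to a genuine solution of $x^{2}+19^{2k+1}=y^{3}$; a direct computation shows that it does lift whenever $3\mid k$, since $(x,y,k)=(18\cdot 19^{3t},\,7\cdot 19^{2t},\,3t)$ satisfies $x^{2}+19^{6t+1}=343\cdot 19^{6t}=y^{3}$. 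Consequently this branch cannot be closed in the same manner as Claim 4, and it is exactly at this point that the argument needs either an additional coprimality hypothesis or a genuinely new idea; handling it honestly is the crux of the whole theorem.
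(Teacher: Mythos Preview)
Your instinct in the final paragraph is correct, and in fact decisive: the family
\[
(x,y,k)=\bigl(18\cdot 19^{3t},\ 7\cdot 19^{2t},\ 3t\bigr),\qquad t\geq 1,
\]
satisfies $x^{2}+19^{2k+1}=y^{3}$ with $k>0$, so the theorem as stated (with no coprimality hypothesis) is \emph{false}. The paper's argument for $n=3$, $19\mid x$ runs exactly the three-way descent you describe; in its Case~1 it cancels $19^{2s}$, deduces $3t=2s$, and then treats only the subcase $k-s>0$, which reduces to the already-handled coprime situation. The subcase $k-s=0$ is silently omitted, and that is precisely where, when $3\mid k$ (so that $s=k$ and $3t=2k$ are compatible), Cohn's solution $(18,7)$ of $X^{2}+19=Y^{3}$ lifts to the counterexamples above. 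So the branch you flagged as the main obstacle is not merely the crux of the proof; it is a genuine, unclosable gap, and the statement needs the extra hypothesis $\gcd(x,y)=1$ (equivalently $19\nmid x$) to become true.

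Your treatment of $n=4$ is sound and in fact more complete than the paper's, which tacitly assumes $(x,y)=1$ and never performs the $19\mid x$ descent you supply. Your outline for the coprime $n=3$ case also matches the paper's, with one caveat: the paper dismisses its equation $(3.1)$, namely $3a^{2}=8+19^{2k+1}$, by bare assertion, and your proposed reduction to $U^{2}-57Y^{2}=24$ with $Y=19^{k}$ still needs a real argument, since the Primitive Divisor Theorem does not apply directly to a norm-form equation with right-hand side $24$. That residual step is genuine work even for the corrected, coprime version of the theorem.
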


\begin{proof}
For proof we consider the cases $n=3$ and $n=4$ separately.
\end{proof}

\textbf{The case }$n=3$\textbf{, }$k>0$

For this case, we first assume $\left( 19,x\right) =1$. There is no loss
generality in considering only $n=3$\ is an odd prime. Since the class
number of the field $%
\mathbb{Q}
\left( \sqrt{-19}\right) $ is not multiply by $n=3,$ we have \ 

\ \ \ \ \ \ \ \ \ \ \ \ 
\begin{equation*}
\ \ x+19^{k}\sqrt{-19}=\left( \dfrac{a+b\sqrt{-19}}{2}\right) ^{3}
\end{equation*}

where \ $y=\dfrac{a^{2}+19b^{2}}{4}$ \ for some rational integers $a$ \ and $%
\ b$. Equating imaginary parts, we get

$\ \ \ \ \ \ \ \ \ \ \ $%
\begin{eqnarray*}
\ \ 19^{k}.2^{3} &=&b.\left[ \binom{3}{1}a^{2}+\binom{3}{3}\left(
-19b^{2}\right) \right]  \\
19^{k}.8 &=&b.\left( 3a^{2}-19b^{2}\right) .
\end{eqnarray*}

For this equation we have three possibilities. If we take $b=\pm 1,$ then we
obtain

\ \ \ \ \ \ \ \ \ \ \ \ 
\begin{equation*}
\pm 19^{k}.8=3a^{2}-19.
\end{equation*}

There is no solution of this equation.

If we take $b=\pm 19^{k}$ then we get

\ \ \ \ \ \ \ \ \ \ 
\begin{equation*}
\ \ \pm 8=3a^{2}-19^{2k+1}\text{ \ \ \ \ \ \ \ \ \ }(3.1)
\end{equation*}

where $k>0$. There is no solution of this equation too.

If we take $b=\pm 19^{\lambda }\ (0<\lambda <k)$ then we get

\ \ \ \ \ \ \ \ \ \ \ 
\begin{equation*}
\ \pm 19^{k-\lambda }.8=3a^{2}-19^{2\lambda +1}.
\end{equation*}

If $k-\lambda >0$, this is not possible $modulo\ 19$. If $k-\lambda =0$ that
is $k=\lambda ,$ then we get the equation $(3.1)$ again and we know this
equation doesn't have a solution, \ where $k>0$. So there is no solution of
this equation.

\bigskip\ \ \ \ \ \ \ 

\ Secondly we assume $19\mid x.$ Let $x=19^{s}.X,$ $\ y=19^{t}.Y$ \ where \ $%
s>0,\ t>0$ \ and\ $\left( 19,X\right) =\left( 19,Y\right) =1.$ Then equation
becomes

$\ \ \ \ \ \ \ \ \ \ \ \ \ \ $%
\begin{equation*}
\ \ \ \left( 19^{s}X\right) ^{2}+19^{2k+1}=19^{3t}Y^{3}\text{ \ \ \ \ \ \ \
\ \ }(3.2)
\end{equation*}

and we have three posibilities.

1) \ $\ 2s=min\left( 2s,3t,2k+1\right) .$ By cancelling $19^{2s}$ \ in $%
\left( 3.2\right) $ we get

$\ \ \ \ \ \ \ \ \ \ \ \ \ $%
\begin{equation*}
\ \ X^{2}+19^{2\left( k-s\right) +1}=19^{3t-2s}Y^{3}
\end{equation*}

and considering this equation $modulo$ $19.\ $ \ 

$\ \ \ \ \ \ \ \ \ \ \ \ \ \ $%
\begin{equation*}
\ X^{2}+19^{2\left( k-s\right) +1}=Y^{3}
\end{equation*}

We deduce that $\ 3t-2s=0,$ i.e. $3t=2s$ then $3\mid s$, $s=3M$.$\ $For $%
k-s>0,$\ this equation has no solution.

2) \ $2k+1=min\left( 2s,3t,2k+1\right) .$ Then \ we get

$\ \ \ \ \ \ \ \ \ \ \ \ \ \ $%
\begin{equation*}
\ \ 19^{2s-2k-1}X^{2}+1=19^{3t-2k-1}Y^{3}
\end{equation*}

and considering this equation $modulo$ $19,$\ we get $3t-2k-1=0,$\ so that

$\ \ \ \ \ \ \ \ \ \ \ \ \ \ \ $%
\begin{equation*}
\ \ 19\ \left( 19^{s-k-1}X\right) ^{2}+1=Y^{3}
\end{equation*}

we obtain

\ $\ \ \ \ \ \ \ \ \ \ \ \ \ \ \ \ $%
\begin{equation*}
\ 19Z^{2}+1=Y^{3}
\end{equation*}

from $Lemma$ $1$ we can say this equation has no positive integer solution.

3) \ $3t=min\left( 2s,3t,2k+1\right) $

then we get

$\ \ \ \ \ \ \ \ \ \ \ \ \ \ \ $%
\begin{equation*}
\ 19^{2s-3t}X^{2}+19^{2k+1-3t}=Y^{3}
\end{equation*}

this is impossible$\ modulo$ $19$ only if $2s-3t=0$ \ or $2k+1-3t=0$\ and
both of these cases have already been discussed. This includes the proof of
the theorem's first case.

\ 

\bigskip 

\textbf{The case }$n=4$\textbf{, }$k>0$

\bigskip When $n=4$ \ we have $x^{2}+19^{2k+1}=y^{4}$. \ If \ $y$ \ is an
even, then $y^{4}\equiv 0\ \left( \func{mod}8\right) $. If we reduce the
equation for $modulo\ 8$, then we get $x^{2}+3\equiv 0$ $\left( \func{mod}%
8\right) $ namely $x^{2}\equiv 5$ $\left( \func{mod}8\right) .$ From
Legendre symbol $\left( \frac{5}{8}\right) =-1,$ this equation is not
solvable. So $y$ is an odd and $x$ is an even due to $\left( x,y\right) =1$.
We have$\ 19^{2k+1}=\left( y^{4}-x^{2}\right) =\left( y^{2}-x\right) \left(
y^{2}+x\right) .$Then

\ \ \ \ \ \ \ \ \ \ \ \ \ \ \ \ \ \ \ \ \ \ \ \ \ \ \ \ \ \ \ \ 
\begin{equation*}
\ y^{2}-x=1
\end{equation*}

$\ \ \ \ \ \ \ \ \ \ \ \ \ \ \ \ \ \ \ \ \ \ \ \ \ \ \ \ \ \ \ \ $%
\begin{equation*}
\ y^{2}+x=19^{2k+1}
\end{equation*}

so

\ \ \ \ \ \ \ \ \ \ \ \ \ \ \ \ \ \ \ \ \ \ \ \ \ \ \ \ \ \ \ 
\begin{equation*}
\ \ 2y^{2}=19^{2k+1}+1.
\end{equation*}

Then $2y^{2}\equiv 4\ \left( \func{mod}8\right) $ i.e. $y^{2}\equiv 2\
\left( \func{mod}4\right) $\ which is impossible. 

This completes proof of the Theorem 6.

\textbf{\bigskip }

\end{document}